\documentclass{article}
\usepackage{amsmath}
\usepackage{ntheorem-hyper}
\usepackage{amssymb}
\usepackage{authblk} 
\usepackage{algorithm}
\usepackage{algorithmicx}
\usepackage{algpseudocode}
\usepackage{graphicx}
\usepackage{graphbox}
\usepackage{subfigure}
\usepackage{svg}
\usepackage{epstopdf}
\usepackage{tabularx}  
\usepackage{booktabs}
\usepackage[numbers, sort]{natbib}
\usepackage{hyperref}
\usepackage[top=2cm, bottom=2cm, left=2cm, right=2cm]{geometry}  
\usepackage{xcolor} 
\usepackage{color}
\usepackage[title]{appendix}

\usepackage[T1,T2A]{fontenc}
\usepackage[utf8]{inputenc}
\usepackage[russian, german, english]{babel}

\hypersetup{urlcolor=blue, citecolor=red}

\newtheorem{theorem}{Theorem}
\newtheorem{prop}{Proposition}
\newtheorem*{proof}{Proof}
\newtheorem{remark}{Remark}[section]

\newtheorem{lemma}{Lemma}

\newtheorem{example}{Example}

\def\RR{\mathbb{R}}

\newcommand{\R}{\mathbb R}
\newcommand{\Rplus}{[0,\infty)}
\newcommand{\MaxCol}{\operatorname{MaxCol}}
\newcommand{\dist}{\operatorname{dist}}
\newcommand{\normmax}[1]{\lVert #1\rVert_{\max}}

\title{Example of exponentially many collisions of identical hard disks in the plane}
\author[+]{Hongda Qiu}
\begin{document}
\maketitle
\begin{abstract}
	Let $\MaxCol(n,d)$ denote the maximal number of nonsimultaneous elastic collisions among $n$ identical hard balls moving freely in $\R^d$. We give a planar construction with exponential lower bound in $n$:
	\[
	\MaxCol(n,2)\geq 2^{\lfloor n/2\rfloor},
	\qquad n\geq3.
	\]
\end{abstract}

\section{Introduction}



Consider a billiard system of $n$ identical hard balls of radius $1/2$ moving freely and colliding elastically in $\RR^d$. Write $q=(q_1,\ldots,q_n)\in\RR^{nd}$ for the center of the balls. Its configuration space, denoted by $B_{n,d}$, is defined as $\{q\in\RR^{nd}||q_i-q_j|\geq1,\quad i\neq j\}$. In other words, $B_{n,d}$ is the complement of all round cylinders $|q_i-q_j|<1$ with $i<j$. We refer to boundaries of these cylinders as walls. 

A trajectory of $n$ balls is a piecewise-linear curve running in $B_{n,d}$ and reflecting symmetrically on walls. If a trajectory $\gamma$ hits a wall of normal $\nu$ at time $t$, its elastic reflection can be described by
\begin{equation}
	\label{equation elastical collision}
	\gamma'(t_+)=\gamma'(t_-)-2\langle \gamma'(t_-),\nu \rangle\nu.
\end{equation}
For nonsimultaneous collisions, a trajectory never hit intersection of two or more walls.

At a point $q$ where several pairs of balls are in contact, the corresponding walls intersect. Their inward tangent half-spaces determine
the tangent cone $K$ of $B_{n,d}$ at $q$. A nonsingular billiard trajectory
in $K$ with finitely many collisions can be
realized by a trajectory in $B_{n,d}$ sufficiently
close to $q$, as shown by Burago and Ivanov
\cite[Lemma~2.2]{BI}. Thus a suitable contact configuration may give
rise to a hard-ball trajectory with many collisions.

Let $\MaxCol(n,d)$ denote the maximal number of nonsimultaneous collisions among $n$ identical hard balls moving freely in $\R^d$. Vaserstein and Galperin proved that every
hard-ball trajectory has finitely many collisions \cite{Va,Ga}.
Uniform bounds on $\MaxCol(n,d)$ were obtained
by Burago, Ferleger, and Kononenko \cite{BFK}
and later improved by Burdzy \cite{BUpper}.
The number can nevertheless grow rapidly with $n$.
Burdzy and Duarte constructed examples with
a cubic-order number of collisions in the plane
\cite{BD}, while Burago and Ivanov obtained
an exponential lower bound in dimension three
\cite{BI}.

In the three-dimensional construction of \cite{BI}, three contact directions issuing from the
same ball can be chosen nearly mutually
perpendicular. For example, four balls
centered at
\[
(0,0,0),\quad (1,0,0),\quad
(0,1,0),\quad (0,0,1)
\]
have three mutually perpendicular contact
directions at the first ball.

Such a configuration is impossible in the plane. Consider an alternative construction of four disks with three contacts as in Figure~\ref{fig:four_disk}. Denote the three contacts between balls by $e_1=[1,2],e_2=[2,3],e_3=[2,4]$. The directions of $e_2$ and $e_3$ are nearly opposite. With a suitable perturbation, we can make collisions occur along contacts in the order of
\[e_3,\quad e_2,\quad e_3,\quad e_1,\quad e_3,\quad e_2, \quad e_3\]
(Example~\ref{example four disks}). The central disk repeatedly collides with the two lower disks while colliding with the upper
disk only once. This pattern can be recursively extended to configurations with more disks. We obtain the following result.

\begin{figure}[htbp]
	\centering
	\includegraphics[width=0.25\textwidth]{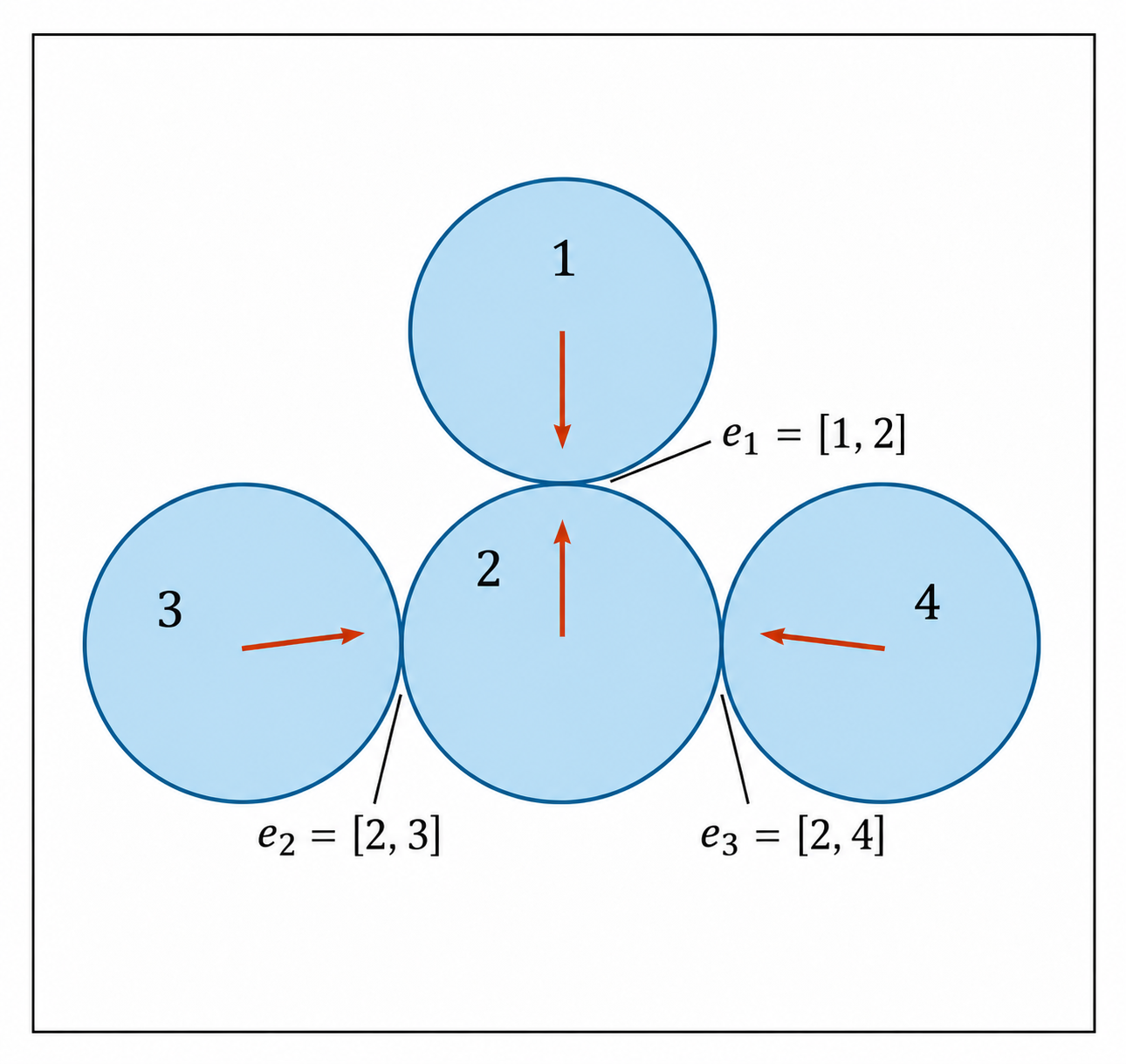}
	\caption{Four disks having three contacts with initial velocities represented by red arrows}
	\label{fig:four_disk}
\end{figure}


\begin{theorem}\label{thm:main}
	For every $n\geq3$,
	\[
	\MaxCol(n,2)\geq2^{\lfloor n/2\rfloor}.
	\]
\end{theorem}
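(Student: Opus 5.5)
The plan is to build, for each $k\ge1$, a configuration of $2k+1$ or $2k+2$ disks whose contact graph is a tree. Near the contact configuration there should be a trajectory realizing $2^{k}$ collisions, or more precisely a word of length $2^{k+1}-1$ in the contact labels. Extra disks, when $n$ is odd or when we need padding, can be placed far away and never collide, so it suffices to treat $n=2k+1$ and obtain $2^k$ collisions.

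\textbf{Reduction to the tangent cone.} By \cite[Lemma~2.2]{BI}, it suffices to produce a nonsingular billiard trajectory in the tangent cone $K$ at a contact configuration $q^*$ with the required number of reflections. Since the contact graph will be a tree, the walls through $q^*$ have linearly independent normals. The cone $K$ is then a polyhedral cone of the form $\{x:\langle x,\nu_e\rangle\ge0,\ e\in E\}$. The motion inside $K$ reduces to a linear reflection group acting on the velocity vector. Each collision along $e$ applies the reflection $s_e$ of \eqref{equation elastical collision}. A collision along $e$ is possible when the current velocity $v$ satisfies $\langle v,\nu_e\rangle<0$ and the trajectory reaches that face first. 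So I would track the quantities $a_e(v)=\langle v,\nu_e\rangle$ together with positions, in the spirit of Example~\ref{example four disks}.

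\textbf{Recursive construction.} The basic gadget is Figure~\ref{fig:four_disk}: a central disk $2$ with two nearly opposite lower contacts $e_2,e_3$ and an upper contact $e_1$. Here $e_1$ fires once between two copies of the word $e_3e_2e_3$.

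I would iterate this by induction. Suppose $W_k$ is a collision word realized by a chain configuration $C_k$ in which one designated \emph{input} contact $f$ occurs exactly once, in the middle of $W_k$, and every other contact is a local oscillation. Attach two new disks to form a new gadget whose central contact fires once and triggers a full run of $W_k$ before and after. This gives $W_{k+1}=W_k\, g\, W_k$, so $|W_{k+1}|\ge 2|W_k|$ at the cost of two disks. The induction hypothesis should be formulated quantitatively. It should state that for velocities in an open set $U_k$ of $T K_k$, the trajectory performs the word $W_k$ and exits with velocity in a set $V_k$ lying in the image of $U_k$ under a suitable reflection. This is what allows the second copy of $W_k$ to run after the new collision $g$. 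Near-opposite contact directions (angles $\pi-\varepsilon_k$) let a single reflection across the new wall approximately reverse the relevant component, while leaving the old subsystem's state nearly preserved up to symmetry.

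\textbf{Main obstacle.} The hard step is verifying the open-condition inductive hypothesis, namely that after the middle collision the state lies again in the basin $U_k$. This needs an explicit symmetry of the configuration, for instance a reflection of the plane mapping the configuration to itself and reversing time. With it, the trajectory after the middle collision is the mirror/time-reverse of the one before. So I would first try to make every $C_k$ symmetric under a reflection $\sigma$ fixing the middle contact. Then I would choose initial data so that at the single middle collision the state is $\sigma$-time-reversal symmetric, which forces the word to be a palindrome $W\,g\,W^{\mathrm{rev}}$ of doubled length.

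The remaining work is to check strict inequalities at each stage. Specifically, the ordering of hitting times must be the prescribed one, no unwanted contacts may fire, and no collisions may be simultaneous. These are all open conditions, so they persist after choosing $\varepsilon_k$ small and perturbing generically, and Lemma~2.2 of \cite{BI} then transfers the cone trajectory to $B_{n,2}$.
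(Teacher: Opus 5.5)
You get the reduction to the tangent cone and the tree/independence observation right, and they match the paper. The paper's model trajectory is also time-symmetric about the unique $e_1$-collision, as you anticipate. The gap is the doubling step $W_{k+1}=W_k\,g\,W_k$: it carries the whole exponential bound, it is asserted rather than proved, and your symmetry device cannot supply it.

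\emph{Symmetry gives no count.} Invariance of the middle state under $\sigma$ composed with time reversal only says that the collisions after $g$ mirror those before $g$. It gives no lower bound on how many occur before $g$. What has to be shown is exactly that the old subsystem completes a run of $W_k$ and then meets $g$ in a symmetric state; for $\sigma=\mathrm{id}$ that state is a head-on collision. Moreover, suppose one mirror $\sigma$ is to preserve both $C_k$ and $C_{k+1}$ and fix $g$. Then the tree condition forces both new disks onto the axis of $\sigma$, with all their contacts along it. So the new disks are at most bivalent with exactly opposite contacts, and cannot be the centre of a new copy of your gadget.

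\emph{A single reflection cannot restart the old word.} The $U_k,V_k$ formulation has the same hole in sharper form. The reflection $s_g$ changes $v$ by a multiple of $\nu_g$. This vector is orthogonal to $\nu_e$ for every old contact $e$ not incident to the disk $D$ that $g$ kicks. After a complete run of $W_k$ one has $a_e(v)\ge0$ for all old $e$, and for $e$ away from $D$ this survives the kick. Hence the second run can only start at $D$ and reach the rest of the subsystem through a cascade. Nothing in your hypothesis controls that cascade or excludes back-reaction, i.e.\ $g$ firing again. Indeed, in the paper's model with $m\ge4$, the walls beyond $e_3$ are reversed at $t=0$ by simultaneous collisions of $e_4,e_6,\dots$ (allowed in a generalized trajectory), not by the single $e_1$-reflection.

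\emph{The missing idea} is a hierarchy of scales together with a non-symmetric limit model. Contacts at depth $k$ have gaps and normal velocities of order $\eta^k$, via $\lambda_i=\eta^{\lfloor i/2\rfloor}$. At each trivalent disk, the upper contact has Gram entry $-\eta$ with the two nearly opposite lower ones. After the diagonal rescaling of Lemma~\ref{lem:BI-tools}(iii), $B_m(\eta)$ lies within $4\eta^2$ of $C_m$. In $C_m$ an upper contact kicks the lower ones with coefficient $-1$, while the lower ones have no back-reaction ($c_{qp}=0$). For $C_m$ the paper writes down an explicit generalized trajectory (Lemma~\ref{lem:model-trajectory}). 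The Burago--Ivanov lemmas (generalized to genuine, stability in the matrix, cone to actual disks) then transfer it to hard disks. Lemma~\ref{lem:packing} realizes the prescribed contacts with no unintended ones, a point your sketch also leaves unchecked.

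Your closing appeal to openness for small $\varepsilon_k$ cannot replace this, because openness only propagates a trajectory that already exists. As the gadget degenerates, the inter-level couplings tend to $0$. In the paper's geometry, $B_m(0)$ splits into $1\times1$ and $2\times2$ blocks admitting only $O(m)$ collisions. A nondegenerate limit exists only after rescaling, and that limit is not a Gram matrix.

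Finally, a minor bookkeeping slip: padding does not reduce $n=2k+2$ to $n=2k+1$, since the target there is $2^{k+1}$, not $2^k$.
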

Assume that there are $m$ contacts at $q$. Denote by $\nu_1,\ldots,\nu_m$ the inward unit normals of walls that form $K$. We construct our example in a way that $\nu_1,\ldots,\nu_m$ are linearly independent (Lemma~\ref{lem:independence}).
In such a case, $K$ is isometric to $K_0\times\RR^{nd-m}$, where
$K_0=K\cap\operatorname{span}\{\nu_1,\ldots,\nu_m\}$ is an
$m$-dimensional polyhedral cone with the same inward unit wall
normals. The linear factor $\RR^{nd-m}$ does not affect collisions, so it suffices to consider trajectories
in $K_0$.

A nonsingular billiard trajectory $\gamma:I\to K_0$ is uniquely
represented by $ t\longmapsto(f_1(t),\ldots,f_m(t)) $, where $f_i$ are nonnegative piecewise-linear functions defined as
\[
f_i(t)=\langle\gamma(t),\nu_i\rangle,
\qquad i=1,\ldots,m.
\]
One may think of $f_i(t)$ as the distance to the tangent plane to the
$i$-th wall at $q$. If we take the derivative and dot-product by $\nu_j$ for both sides of Equation~\ref{equation elastical collision}, we obtain
\begin{equation}
	\label{equation physical equation}
	f_j'(t_+) = f_j'(t_-)-2\langle \nu_i,\nu_j \rangle f_i'(t_-),\quad j=1,...,m.
\end{equation}
Thus the reflection law in wall coordinates
is determined by the Gram matrix
$(\langle\nu_i,\nu_j\rangle)_{i,j=1}^m$.

We therefore
consider more general reflection rules obtained by
replacing these scalar products with the entries
of an admissible matrix. We adopt some language introduced by Burago and Ivanov \cite{BI}. An $m\times m$ matrix $A=(a_{ij})$ is called \emph{admissible} if
\[
a_{ii}=1
\qquad\text{for every }i.
\]
Note that an admissible matrix does not have to be symmetric. 

An $A$-trajectory is a piecewise-linear map
\[
f=(f_1,\dots,f_m):I\longrightarrow \Rplus^m
\]
having finitely many break points and no simultaneous zeros, such that (i) $f$ is linear if all $f_i>0$,  (ii) if $f_i(t)=0$, then
\[
f_j'(t+)=f_j'(t-)-2a_{ij}f_i'(t-),
\qquad j=1,\dots,m,
\]
and (iii) no collisions occur at end points of $I$.

A generalized $A$-trajectory is defined similarly, except that several coordinates may vanish simultaneously, provided that
$a_{ij}=a_{ji}=0$ whenever $f_i(t)=f_j(t)=0$ for $i\neq j$. At such a time, the reflection rule becomes
\[
f_j'(t+)=f_j'(t-)
-2\sum_{i:f_i(t)=0}a_{ij}f_i'(t-),
\qquad j=1,\ldots,m.
\]
A simultaneous zero of $k$ coordinates counts
as $k$ collisions.
See \cite[Definitions~3.2 and~3.6]{BI}.


To pass from generalized trajectories to actual hard-ball trajectories, we use the following results of Burago and Ivanov \cite{BI}.
\begin{lemma}[Burago--Ivanov]\label{lem:BI-tools}
	\begingroup
	\renewcommand{\labelenumi}{(\roman{enumi})}
	\renewcommand{\theenumi}{\thelemma(\roman{enumi})}
	\begin{enumerate}
		
		\item\label{lemma 1}
		A generalized $A$-trajectory with $N$ collisions can be perturbed to a genuine $A$-trajectory with $N$ collisions \cite[Lemma~3.7]{BI}.
		
		\item\label{lemma 2}
		If an admissible matrix $A$ admits an $A$-trajectory with $N$ collisions, then every admissible matrix $\tilde{A}$ that is close enough to $A$ (in matrix max norm) admits a $\tilde{A}$-trajectory with $N$ collisions
		\cite[Lemma~3.3]{BI}.
		
		\item\label{lemma 3}
		Given positive numbers $\lambda_1,\dots,\lambda_m$, define
		\[
		(A^\lambda)_{ij}=\frac{\lambda_j}{\lambda_i}a_{ij}.
		\]
		If $A$ admits an $A$-trajectory with $N$ collisions, then so does $A^\lambda$
		\cite[Lemma~3.4]{BI}.
		
		\item\label{lemma 4}
		If the tangent cone at a hard-ball configuration admits a billiard trajectory with $N$ collisions, then an actual hard-ball system admits a trajectory with at least $N$ collisions \cite[Lemma~2.2]{BI}.
	\end{enumerate}
	\endgroup
\end{lemma}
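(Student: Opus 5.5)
The four claims are of different kinds: (iii) is an exact change of variables, (i), (ii) and (iv) are perturbation arguments. I would prove (iii) first and then derive (i), (ii), (iv) from one stability principle: along an $A$-trajectory with finitely many nonsimultaneous collisions, each collision is \emph{transversal}. That is, at a collision time $t_k$ with $f_{i_k}(t_k)=0$ we have $f_{i_k}'(t_k-)<0$, and all other coordinates are strictly positive at $t_k$.

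For (iii), given an $A$-trajectory $f$, set $g_i=\lambda_i f_i$. Then $g\ge 0$, $g$ is linear wherever all $g_i>0$, and $g$ has the same zero times as $f$. At a zero of $g_i$, multiplying the reflection rule by $\lambda_j$ gives
\[
g_j'(t+)=g_j'(t-)-2a_{ij}\lambda_j f_i'(t-)=g_j'(t-)-2\frac{\lambda_j}{\lambda_i}a_{ij}\,g_i'(t-).
\]
This is exactly the $A^\lambda$ rule. The diagonal entries are unchanged, so $A^\lambda$ is admissible, and the number of collisions is preserved.

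For (ii), let $f$ have collision times $t_1<\dots<t_N$ inside $I$. I would show by induction on $k$ that the state $(f(t),f'(t))$ just after the $k$-th collision depends continuously on the matrix and on the initial data. Between collisions the motion is linear. Transversality together with strict positivity of the other coordinates implies, by the implicit function theorem applied to the linear function $f_{i_k}$, two things for every $\tilde A$ near $A$ and the same initial data. First, the trajectory still meets $\{f_{i_k}=0\}$ first, at a nearby time. Second, no other coordinate vanishes in a neighbourhood of that collision. The post-collision velocity is then given by the $\tilde A$ rule, which differs from the $A$ rule by $O(\normmax{\tilde A-A})$. After the last collision all coordinates stay positive up to the right endpoint of $I$ for $\tilde A$ close to $A$, so the endpoints remain collision-free. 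Hence $\tilde A$ has an $\tilde A$-trajectory with exactly $N$ collisions.

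For (i), at a simultaneous zero of a set $S$ of coordinates we have $a_{ij}=a_{ji}=0$ for $i\ne j$ in $S$. Performing the single reflections for $i\in S$ one at a time, in any order, therefore gives the same outgoing velocity as the generalized rule. Indeed, reflecting in $i$ leaves $f_j'$ unchanged for $j\in S\setminus\{i\}$, so the later reflections use the original incoming velocities. I would perturb the initial point slightly in a generic direction so that the $|S|$ zeros separate into distinct nearby times, and do this inductively over the finitely many singular times. Every nonsimultaneous collision of the perturbed path is transversal. Near a singular time the separated collisions reproduce the generalized reflection up to a small error. Applying the continuity argument of (ii) on the intervals between singular times then produces a genuine $A$-trajectory with $N=\sum|S|$ collisions.

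For (iv), which I expect to be the main obstacle, let $\gamma$ be the billiard trajectory in $K$, defined on $[0,T]$, with $N$ nonsingular collisions. Pairs of balls not in contact at $q$ satisfy $|q_i-q_j|>1$, so near $q$ only the walls through $q$ matter. Consider the rescaled domains $B_\varepsilon=(B_{n,d}-q)/\varepsilon$. On bounded sets their walls are smooth hypersurfaces converging in $C^1$ to the tangent hyperplanes of $K$, and their inward normals converge to the $\nu_i$. I would run the induction of (ii) with the domain, rather than the matrix, as the parameter. Start the billiard in $B_\varepsilon$ from the initial data of $\gamma$. Transversality of each collision with a flat wall persists for the curved wall, with collision times and reflected velocities within $o(1)$ as $\varepsilon\to0$, and no extra contacts arise because the other walls stay a definite distance away. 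For small $\varepsilon$ this gives a trajectory in $B_\varepsilon$ with $N$ collisions. Scaling by $\varepsilon$ and translating by $q$ yields an actual hard-ball trajectory in $B_{n,d}$, since reflections are invariant under homothety. The delicate points are uniformity of the $C^1$ convergence on the compact region swept by $\gamma$ and making sure the curved walls introduce no grazing collisions. Both are handled by the strict transversality margins, which are finite in number.
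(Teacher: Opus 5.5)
The paper gives no proof of this lemma. Each item is quoted from Burago--Ivanov (their Lemmas 3.7, 3.3, 3.4 and 2.2) and used as a black box in the proof of the main theorem. Your proposal is correct and supplies self-contained arguments along the standard lines.

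For (iii), the substitution $g_i=\lambda_i f_i$ gives exactly the convention $(A^\lambda)_{ij}=\frac{\lambda_j}{\lambda_i}a_{ij}$ that the paper needs for $B_m(\eta)=A_{m,\eta}^\lambda$.

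For (ii) and (iv), the key input is transversality. A nonnegative piecewise-linear coordinate with an isolated zero must arrive with strictly negative slope, and $\tilde a_{ii}=1$ makes it leave with exactly the reversed slope. Hence the collision sequence is stable under small changes of the matrix or of the domain. In (iv) the $C^1$ convergence you invoke is explicit: for a contact pair $\{i,j\}$ the rescaled wall is $2\langle q_i-q_j,x_i-x_j\rangle+\varepsilon|x_i-x_j|^2=0$, and the walls of noncontact pairs recede to infinity.

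For (i), the commutation of the reflections indexed by $S$ is exactly the point. The citation buys brevity. Your route shows which hypotheses are actually used: $a_{ii}=1$, nonsingularity in (iv), and only the vanishing of $a_{ij},a_{ji}$ inside each simultaneous set $S$ in (i).

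The one claim you assert rather than justify is the existence of a ``generic direction'' in (i). Here is how to close it.
\begin{itemize}
\item Write $R_i v=v-2v_i(a_{i1},\dots,a_{im})$.
\item Displace the initial position by $\delta$ and keep the velocity. A direct computation shows that, while the collision pattern is preserved, the perturbed path away from collision times is $f(t)+L_t\delta$. Here $L_t$ is the product of the $R_i$ over the collisions before $t$; separated collisions at a singular time contribute the commuting product over $i\in S$.
\item Since $a_{ii}=1$, each $R_i$ is an involution, so $L_t$ is invertible.
\item Therefore the coincidence condition $(L_{t^*}\delta)_i/f_i'(t^*-)=(L_{t^*}\delta)_j/f_j'(t^*-)$, for $i\neq j$ in $S$, cuts out a hyperplane of $\delta$'s.
\end{itemize}
Avoiding these finitely many hyperplanes gives a single small perturbation that separates every singular time at once. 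The outgoing velocities then agree exactly, not merely up to a small error, with the generalized rule.
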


For the rest of content, we focus on disks in the plane and fix $d=2$. We begin by constructing a model trajectory with $m$ initial contacts leading to exponentially many collisions in Section~\ref{section model trajectory}, then realize its dynamics through a planar configuration in Section~\ref{section planar caterpillar packing}. Finally, we construct an actual trajectory with the same amount of collisions as the model trajectory in Section~\ref{section proof of main theorem}. This proves our main theorem. 

\textit{Acknowledgement.} I would like to thank Dmitri Burago and Anton Petrunin for their time on reading the first draft of this work and for their encouragement.

%

\section{Model trajectory}
\label{section model trajectory}
For $m\geq1$, define an admissible matrix
\[
C_m=(c_{ij})_{i,j=1}^m
\]
by
\[
c_{ij}=
\begin{cases}
	1,&i=j,\\
	-1,&i\text{ is odd and }j\in\{i+1,i+2\},\\
	-\dfrac12,&|i-j|=1\text{ and }\min\{i,j\}\text{ is even},\\
	0,&\text{otherwise}.
\end{cases}
\]
Thus, for
\[
p=2k-1,
\qquad q=2k,
\qquad r=2k+1,
\]
the block on the indices $p,q,r$ is
\[
\left.C_m\right|_{\{p,q,r\}}
=
\begin{pmatrix}
	1&-1&-1\\
	0&1&-\frac12\\
	0&-\frac12&1
\end{pmatrix},
\]
whenever $r\leq m$. In particular, $C_m$ is generally not symmetric.

\begin{lemma}\label{lem:model-trajectory}
	For every $m\geq1$, the matrix $C_m$ admits a generalized trajectory with at least $ 2^{\lfloor(m+1)/2\rfloor} $ collisions.
\end{lemma}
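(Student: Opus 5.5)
The plan is to argue by induction on $m$, using the block structure of $C_m$. Each odd index $p=2k-1$ pushes its two successors $q=2k$ and $r=2k+1$ toward zero. By the reflection rule, if $f_p$ hits zero with $f_p'(t-)=-v$, then $f_q'$ and $f_r'$ each decrease by $2v$. The pair $(q,r)$ forms a $60^\circ$ wedge, since $c_{qr}=c_{rq}=-\tfrac12$, and so admits the three-collision pattern $r,q,r$.

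The feedback is essentially one-directional. For $j\ge 4$ we have $c_{j,i}=0$ for all $i\le 3$, and row $2$ acts only on coordinate $3$. Consequently:
\begin{itemize}
\item coordinates $1,2,3$ evolve autonomously under the upper-left $3\times 3$ block;
\item coordinates $3,\dots,m$ evolve under a shifted copy of $C_{m-2}$, except that coordinate $3$ additionally receives kicks when $f_1$ or $f_2$ vanish.
\end{itemize}
This is exactly the four-disk pattern $e_3,e_2,e_3,e_1,e_3,e_2,e_3$ from the introduction, read as a recursion.

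I would prove a strengthened statement $(H_m)$. It asserts that $C_m$ admits a generalized trajectory with at least $N(m)=2^{\lfloor (m+1)/2\rfloor}$ collisions in which:
\begin{itemize}
\item coordinate $1$ collides exactly once;
\item at the start all coordinates are positive and coordinate $1$ is the first to reach $0$;
\item after its last collision the trajectory moves away from all walls, so that it can be truncated and rescaled freely.
\end{itemize}

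The base cases are $m=1$ (one collision) and $m=2$ (coordinate $1$ hits once, then coordinate $2$ is pushed and hits once, giving $2$ collisions). Note that $c_{21}=0$, so coordinate $2$'s collision does not affect coordinate $1$.

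For the step $m-2\to m$, I would build the trajectory of $f_1,f_2,f_3$ so that $f_1$ hits once, after which the wedge $(2,3)$ produces $3,2,3$ in some order with coordinate $3$ vanishing twice. Each zero of $f_3$ is the unique collision of coordinate $1$ in a shifted copy of the $(H_{m-2})$ trajectory on indices $3,\dots,m$, and that copy contributes $N(m-2)$ collisions. This gives at least $2N(m-2)=N(m)$ collisions in total.

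To insert the copies, I will use two symmetries of $A$-trajectories:
\begin{itemize}
\item invariance under time scaling and under space scaling $f\mapsto cf$, which are linear symmetries of the cone;
\item the diagonal rescaling of Lemma~\ref{lemma 3}, which lets me adjust the relative sizes of the higher coordinates $4,\dots,m$.
\end{itemize}
Concretely, I would place coordinates $4,\dots,m$ at heights $\varepsilon\cdot(\text{initial data of the copy})$, with $\varepsilon$ tiny. Then the whole cascade triggered by a zero of $f_3$ takes place in a time window of size $O(\varepsilon)$ around that zero. After the cascade, the copy's trajectory moves away from its walls, so coordinates $4,\dots,m$ are again positive and moving. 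Rescaling the copy's exit state, I then need the second approach of $f_3$ to zero to re-trigger a full cascade.

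The main obstacle is this re-triggering. After the first cascade the higher coordinates have velocities determined by the first copy, not the prescribed initial data for a second copy. I would handle this using time reversal, which maps $A$-trajectories of the autonomous block to trajectories with the same collision sequence read backwards. I would take the second cascade to be the time-reversed image of the first, arranged symmetrically about the middle collision of coordinate $2$ (where $c_{2j}=0$ for $j\ge4$, so nothing in the higher block is disturbed). This requires the strengthened hypothesis to include a reversibility condition, for example that the exit velocities of coordinates $4,\dots,m$ can be matched after one $C_{m-2}$-consistent free flight.

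If exact matching fails, I would fall back on generalized trajectories with simultaneous zeros among indices with $c_{ij}=c_{ji}=0$. I would then use Lemma~\ref{lemma 1} and Lemma~\ref{lemma 2} to absorb the small mismatch, since the collision count is stable under small perturbations.
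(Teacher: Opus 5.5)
The gap is the step you flag yourself, the second cascade, and neither of your remedies closes it. Your structural observations are correct: rows $j\ge4$ of $C_m$ vanish in columns $1,2,3$; row $2$ acts only on column $3$; the $\{2,3\}$ block is a genuine $60^\circ$ wedge; and time reversal preserves $A$-trajectories.

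The time-reversal remedy fails for the following reason.
\begin{itemize}
\item Columns $4,\dots,m$ have nonzero off-diagonal entries only in rows $\ge3$.
\item Between the end of the first cascade and the second zero of $f_3$, none of $f_3,\dots,f_m$ vanishes.
\item So the velocities $f_j'$, $j\ge4$, are frozen on that interval. In particular, the coordinate that collided last in the first cascade keeps a strictly positive velocity.
\item Hence these velocities are continuous at the middle zero $t^*$ of $f_2$, where $c_{2j}=0$ for $j\ge4$. But symmetry about $t^*$ would require $f_j'(t^*+)=-f_j'(t^*-)$, that is, $f_j'(t^*)=0$ for all $j\ge4$. That is impossible.
\end{itemize}
Put differently, a time-reversed cascade starts with the higher coordinates already falling toward their walls and ends with the zero of $f_3$, whereas yours are moving away. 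So at the second zero of $f_3$ the block $4,\dots,m$ sits at heights comparable to the elapsed time, not at scale $\varepsilon$, with the exit velocities of the first copy. No scale separation is left, and $(H_{m-2})$ says nothing about what happens next.

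The fallback does not help either. Lemma~\ref{lemma 1} turns a generalized trajectory that already obeys the reflection law into a genuine one, and Lemma~\ref{lemma 2} perturbs the matrix. Neither lets you reset the state of a trajectory midway, and the mismatch here is not small.

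A separate, smaller point: $2^{\lfloor(1+1)/2\rfloor}=2$, while $C_1=(1)$ admits only one collision. So the statement, and your $(H_1)$, are false at $m=1$; the paper's own count also gives $N_1=1$. Your induction should track the true count $2M(m-2)+2$, including the zeros of $f_1$ and $f_2$, and start at $m=2,3$.

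The paper avoids a dynamical induction altogether. It prescribes all collision times at once as nested arithmetic progressions:
\begin{itemize}
\item $T_1=\{0\}$;
\item $T_{2k}$ consists of the midpoints of consecutive points of $T_{2k-1}$, plus one extra point at each end;
\item $T_{2k+1}$ has $T_{2k-1}\cup T_{2k}$ as its set of midpoints.
\end{itemize}
Thus $|T_{2k+1}|=2|T_{2k-1}|+2$; already $f_3$ vanishes four times, in the pattern $3,2,3,1,3,2,3$. The paper then writes the trajectory explicitly. It takes $f_i=\dist(\cdot,T_i)$ for odd $i$ and for a terminal even $i=m$. For internal even $i$ it uses a profile with slopes $2,1,-1,-2$ on each gap of $T_i$. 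It then checks the jump rule at every breakpoint: at each breakpoint of $f_j$, exactly one coordinate $i$ with $c_{ij}\neq0$ vanishes, arriving with slope $-1$ or $-2$. The levels are interleaved at comparable scales rather than split into localized copies.

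Incidentally, the paper's trajectory does have the time symmetry you were after, but centred at the zero of $f_1$. One checks that it is symmetric about $t=0$, where $f_1$ and all $f_{2k}$ with $4\le 2k\le m$ vanish simultaneously. This is permitted because the corresponding entries of $C_m$ vanish, and it makes every velocity reverse at once.
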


\begin{proof}
	We construct the zero sets of coordinate functions recursively. Set
	\[
	T_1=\{0\},
	\]
	and regard $T_1$ as an arithmetic progression with common difference $1$.
	
	Suppose that, for some $k\geq1$,
	\[
	p=2k-1
	\]
	and
	\[
	T_p=\{x_1<\cdots<x_M\}
	\]
	is an arithmetic progression with common difference $\beta>0$. Put $q=2k$ and define
	\[
	y_s=x_1+\left(s-\frac32\right)\beta,
	\qquad 1\leq s\leq M+1,
	\]
	\[
	T_q=\{y_1,\dots,y_{M+1}\}.
	\]
	Then
	\[
	y_1<x_1<y_2<x_2<\cdots<y_M<x_M<y_{M+1},
	\]
	so $T_p\cup T_q$ is an arithmetic progression with common difference $\beta/2$.
	
	If $r=q+1\leq m$, define
	\[
	z_\ell=y_1+\left(\ell-\frac32\right)\frac{\beta}{2},
	\qquad 1\leq\ell\leq2M+2,
	\]
	\[
	T_r=\{z_1,\dots,z_{2M+2}\}.
	\]
	The points of $T_p\cup T_q$ are precisely the midpoints between consecutive points of $T_r$.
	
	For every odd $i$, define
	\[
	f_i(t)=\dist(t,T_i).
	\]
	If $m$ is even, define the terminal even component by
	\[
	f_m(t)=\dist(t,T_m).
	\]
	
	It remains to define an even component $f_q$ when $q+1\leq m$. On each interval $[y_s,y_{s+1}]$, set
	\[
	f_q(t)=
	\begin{cases}
		2(t-y_s),
		& y_s\leq t\leq y_s+\frac{\beta}{4},\\[1mm]
		t-y_s+\frac{\beta}{4},
		& y_s+\frac{\beta}{4}\leq t\leq y_s+\frac{\beta}{2},\\[1mm]
		\frac{5\beta}{4}-(t-y_s),
		& y_s+\frac{\beta}{2}\leq t\leq y_s+\frac{3\beta}{4},\\[1mm]
		2\beta-2(t-y_s),
		& y_s+\frac{3\beta}{4}\leq t\leq y_{s+1}.
	\end{cases}
	\]

	Thus the successive slopes of $f_q$ on $[y_s,y_{s+1}]$ are respectively
	\[
	2,
	\qquad1,
	\qquad-1,
	\qquad-2.
	\]
	To the left of $y_1$, continue $f_q$ with slope $-2$ on $[y_1-\beta/4,y_1]$ and slope $-1$ on $(-\infty,y_1-\beta/4]$. To the right of $y_{M+1}$, continue it with slope $2$ on $[y_{M+1},y_{M+1}+\beta/4]$ and slope $1$ on $[y_{M+1}+\beta/4,\infty)$. The resulting function is continuous, nonnegative, and has zero set exactly $T_q$.
	
	We now verify the generalized reflection law. It is convenient to do so coordinate by coordinate.
	
	Let $j$ be odd. The breakpoints of $f_j=\dist(\,\cdot\,,T_j)$ are its roots $T_j$ and, when $j\geq3$, the midpoints between consecutive roots. By construction, those midpoints are exactly the points of
	\[
	T_{j-2}\cup T_{j-1}.
	\]
	At a point of $T_j$, the slope changes from $-1$ to $1$, which is the self-reflection corresponding to $c_{jj}=1$. At a point of $T_{j-2}$, the slope changes from $1$ to $-1$; since
	\[
	c_{j-2,j}=-1
	\qquad\text{and}\qquad
	f_{j-2}'(t-)=-1,
	\]
	the required jump is
	\[
	-2c_{j-2,j}f_{j-2}'(t-)=-2.
	\]
	At a point of $T_{j-1}$, the same slope change is produced by
	\[
	c_{j-1,j}=-\frac12,
	\qquad
	f_{j-1}'(t-)=-2,
	\]
	again giving the jump $-2$. No other row of $C_m$ has a nonzero entry in column $j$.
	
	Let $j$ be an internal even index, so $j+1\leq m$. The breakpoints of $f_j$ are exactly the points of
	\[
	T_{j-1}\cup T_j\cup T_{j+1}.
	\]
	At a point of $T_j$, the slope changes from $-2$ to $2$, as required by $c_{jj}=1$. At a point of $T_{j-1}$, it changes from $1$ to $-1$; this equals the jump
	\[
	-2c_{j-1,j}f_{j-1}'(t-)
	=-2(-1)(-1)=-2.
	\]
	At a point of $T_{j+1}$, the slope decreases by $1$, namely
	\[
	2\longmapsto1
	\qquad\text{or}\qquad
	-1\longmapsto-2.
	\]
	Since
	\[
	c_{j+1,j}=-\frac12
	\qquad\text{and}\qquad
	f_{j+1}'(t-)=-1,
	\]
	the required jump is
	\[
	-2c_{j+1,j}f_{j+1}'(t-)=-1.
	\]
	Again, no other row has a nonzero entry in column $j$.
	
	Finally, if $j=m$ is even, then $f_j=\dist(\,\cdot\,,T_j)$. Its roots give the self-reflection $-1\mapsto1$, and its midpoints are precisely the points of $T_{j-1}$, where the jump $1\mapsto-1$ agrees with $c_{j-1,j}=-1$.
	
	It follows that every breakpoint occurs at a time when at least one component vanishes and that the generalized jump formula holds at every such time.
	
	Indeed, every pair joined by a nonzero off-diagonal
	entry belongs either to a complete triple $\{p,q,r\}$
	or, when $m$ is even, to the terminal pair
	$\{m-1,m\}$. The zero sets associated with each
	complete triple are pairwise disjoint, and the terminal zero sets
	$T_{m-1}=\{x_1,\ldots,x_M\}$ and
	$T_m=\{y_1,\ldots,y_{M+1}\}$ satisfy
	\[
	y_1<x_1<y_2<x_2<\cdots<y_M<x_M<y_{M+1},
	\]
	so $T_{m-1}\cap T_m=\varnothing$.
	Hence any two components that vanish simultaneously
	have zero matrix entries in both directions. Since all root sets are finite, we may restrict the trajectory to a bounded interval containing every root and having no collision at either endpoint. Thus the functions $f_1,\dots,f_m$ form a generalized $C_m$-trajectory.
	
	It remains to count the collisions. Denote
	\[
	a_i:=|T_i|.
	\]
	The construction gives
	\[
	a_1=1,
	\qquad
	a_{2k}=a_{2k-1}+1,
	\qquad
	a_{2k+1}=2a_{2k-1}+2.
	\]
	Hence
	\[
	a_{2k-1}=3\cdot2^{k-1}-2,
	\qquad
	a_{2k}=3\cdot2^{k-1}-1.
	\]
	Summing these cardinalities yields
	\[
	\sum_{i=1}^{2k-1}a_i
	=2^{k+2}+2^{k-1}-3k-5
	\]
	and
	\[
	\sum_{i=1}^{2k}a_i
	=2^{k+2}+2^{k+1}-3k-6.
	\]
	Therefore the total amount of collisions is
	\[
	N_m=
	\begin{cases}
		2^{k+2}+2^{k-1}-3k-5, & m=2k-1,\\[2mm]
		2^{k+2}+2^{k+1}-3k-6, & m=2k.
	\end{cases}
	\]
\end{proof}

The collision count agrees with \cite[Equation~$3.10$]{BI}.

Note that the collision times of the generalized
$C_m$-trajectory constructed in
Lemma~\ref{lem:model-trajectory} lie in
$(-3/2,3/2)$, independently of $m$.
Indeed, at the $k$-th step of the recursion,
the outermost collision times extend by at most
$3\beta_k/4$ on each side, where
$\beta_k=2^{-(k-1)}$, and
\[
\sum_{k=1}^{\infty}\frac{3\beta_k}{4}
=\frac32.
\]

For illustration, we present an example with $4$ disks and $3$ initial contacts; namely, an ``auxiliary" trajectory corresponding to $C_3$. 
\begin{example}[Four disks]
	\label{example four disks}
	For $n=4$, we have $m=3$. In
	Lemma~\ref{lem:model-trajectory}, take
	\[
	p=1,\qquad q=2,\qquad r=3,
	\qquad M=1,\qquad x_1=0,\qquad\beta=1.
	\]
	The admissible matrix is
	\[
	C_3 = \begin{pmatrix}
		1&-1&-1\\
		0&1&-\frac{1}{2}\\
		0&-\frac{1}{2}&1
	\end{pmatrix},
	\]
	where the two entries equal to $-\frac{1}{2}$ are corresponding to the two nearly opposite contacts $e_2$ and $e_3$ (see also Figure~\ref{fig:four_disk_contact_graph}).
	\begin{figure}[htbp]
		\centering
		\includegraphics[width=0.75\textwidth]{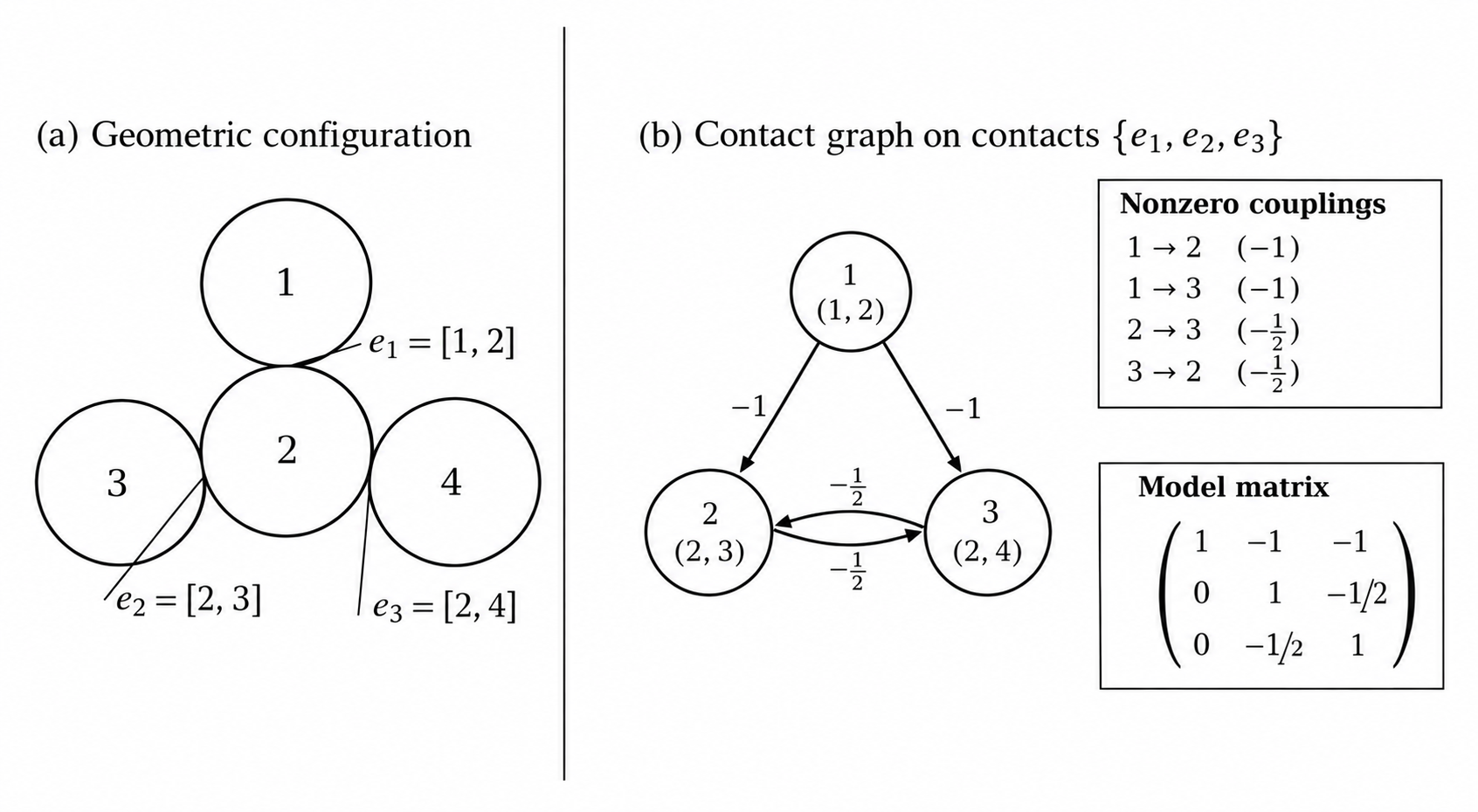}
		\caption{Four-disk structure in this work.}
		\label{fig:four_disk_contact_graph}
	\end{figure}
	
	The collision moments in model time are given by
	\[
	\begin{aligned}
		T_1&=\{x_1\}=\{0\},\\
		T_2&=\{y_1,y_2\}
		=\left\{-\frac12,\frac12\right\},\\
		T_3&=\{z_1,z_2,z_3,z_4\}
		=\left\{-\frac34,-\frac14,\frac14,\frac34\right\}.
	\end{aligned}
	\]
	They happen in the order $	z_1<y_1<z_2<x_1<z_3<y_2<z_4 $.
	
	Denote four disks as $q_1,q_2,q_3,q_4$, and their contacts as $e_1=\{q_1,q_2\},\quad
	e_2=\{q_2,q_3\},\quad
	e_3=\{q_2,q_4\}$. The trajectory is represented by $t\mapsto(f_1(t),f_2(t),f_3(t))$. Consequently, the chronological collision list is
	\[
	\begin{array}{c|c|c|c}
		\text{Order}
		& \text{Model time}
		& \text{Vanishing coordinate}
		& \text{Disk pair}\\
		\hline
		1 & z_1=-\frac34 & f_3 & (2,4)\\
		2 & y_1=-\frac12 & f_2 & (2,3)\\
		3 & z_2=-\frac14 & f_3 & (2,4)\\
		4 & x_1=0        & f_1 & (1,2)\\
		5 & z_3=\frac14  & f_3 & (2,4)\\
		6 & y_2=\frac12  & f_2 & (2,3)\\
		7 & z_4=\frac34  & f_3 & (2,4)
	\end{array}
	\]
	Equivalently, the contact sequence is
	\[
	e_3\longrightarrow e_2\longrightarrow e_3
	\longrightarrow e_1\longrightarrow e_3
	\longrightarrow e_2\longrightarrow e_3.
	\]
	There are exactly $	N_3=|T_1|+|T_2|+|T_3|=1+2+4=7 $ collisions. 
\end{example}

\section{Planar packing and compatibility}
\label{section planar caterpillar packing}
We next construct a planar configuration of
$m+1$ disks whose contact graph is compatible
with the auxiliary dynamics of Section 2.
The main geometric requirement is that the
prescribed contacts occur without introducing
any additional contacts.
\begin{figure}[!htbp]
	\centering
	\includegraphics[width=0.72\textwidth]{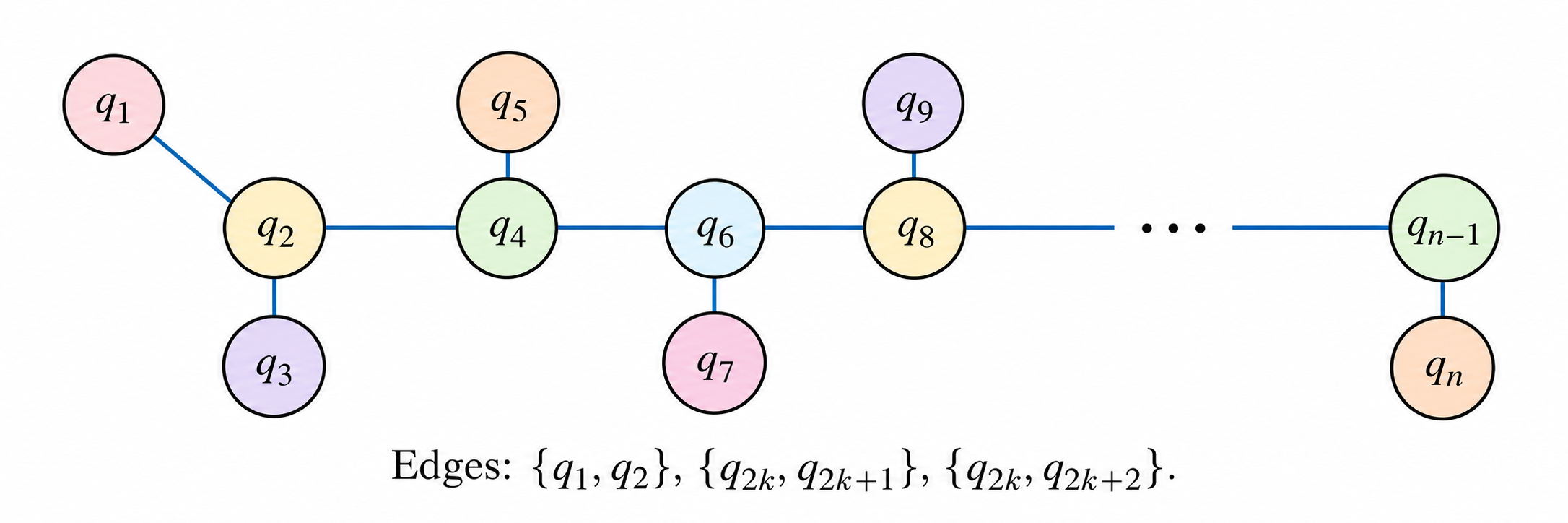}
	\caption{The contact graph of the planar caterpillar packing. The prescribed contacts are $\{q_1,q_2\}$, $\{q_{2k},q_{2k+1}\}$, and $\{q_{2k},q_{2k+2}\}$ whenever the displayed indices do not exceed $m+1$.}
	\label{fig:planar-caterpillar-contact-graph}
\end{figure}

\begin{lemma}\label{lem:packing}
	Let $m\geq1$ and $0<c\leq1/5$. There exists a configuration of $m+1$ points in $\R^2$ with contact graph
	\[
	\{1,2\},
	\qquad
	\{2k,2k+1\},
	\qquad
	\{2k,2k+2\},
	\]
	whenever the displayed indices do not exceed $m+1$, such that every prescribed contact has length $1$ and every noncontact pair has distance at least $1+2c$.
\end{lemma}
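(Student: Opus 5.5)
We will give an explicit coordinate construction of a ``caterpillar'': the even-indexed points $q_2,q_4,q_6,\dots$ form the spine, a polygonal path with unit steps, and each odd-indexed point $q_{2k+1}$ ($k\geq1$) is a leg attached to $q_{2k}$. The point $q_1$ is an extra leg on $q_2$. The contact pattern in Lemma~\ref{lem:packing} is exactly this tree: the spine edges $\{2k,2k+2\}$, the legs $\{2k,2k+1\}$, and $\{1,2\}$. Because of Example~\ref{example four disks}, where $e_2$ and $e_3$ are nearly opposite, we aim to make the leg $q_{2k+1}$ point roughly opposite to the next spine edge $q_{2k}\to q_{2k+2}$. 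The later sections need this geometry, but the present lemma only asks for the distance conditions, so we are free to choose a convenient placement.

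Concretely, we propose a straight horizontal spine $q_{2k}=(k,0)$, so consecutive spine points are at distance $1$ and nonconsecutive ones at distance at least $2$. Each leg points vertically, alternating up and down:
\[
q_{2k+1}=\bigl(k,(-1)^k\bigr),\qquad q_1=(1,1)\ \text{or a direction chosen to avoid }q_3 .
\]
This choice, however, makes $q_{2k+1}$ exactly perpendicular to the spine rather than nearly opposite to it. If later sections require near-opposite legs, we instead bend the spine into a zigzag: we choose unit spine steps $u_k$ making a fixed angle $\theta$ with the horizontal and alternating in sign, and we place the leg at $q_{2k}-u_k$, suitably perturbed. We then verify the distance bounds by the same case analysis described next. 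With the straight spine and alternating legs, the needed distances are easy to read off. Leg $q_{2k+1}$ to spine point $q_{2j}$ with $j\neq k$ has distance $\sqrt{(k-j)^2+1}\geq\sqrt2$. Two legs on the same side are at horizontal distance at least $2$. Two adjacent legs lie on opposite sides, so their distance is $\sqrt{1+4}=\sqrt5$. For $q_1$ we take $(2,0)+(-\tfrac{\sqrt2}{2},\pm\tfrac{\sqrt2}{2})$, the sign chosen opposite to that of $q_3$. Then $|q_1-q_3|\geq 1.8$ and $|q_1-q_4|$ is well above $1.4$.

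The key step is the case analysis over the types of noncontact pairs: spine–spine, leg–spine, leg–leg, and pairs involving $q_1$. In each case we show the distance is at least $\sqrt2\approx1.414$. Since $c\leq1/5$ gives $1+2c\leq1.4$, this suffices. The main obstacle is the boundary cases: small $m$, where $m+1$ may be even or odd so that the last point is either a spine point or a leg, and the placement of $q_1$ relative to $q_3$ and $q_4$. We handle these by checking the few pairs within index distance $3$ explicitly. Pairs farther apart in index are separated by horizontal distance at least $1$ together with vertical separation, or by horizontal distance at least $2$, so they satisfy the bound automatically.
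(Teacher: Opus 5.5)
Your straight-spine configuration does prove Lemma~\ref{lem:packing} as literally stated, but you need to fix the placement of $q_1$. With $q_{2k}=(k,0)$ the disk $q_2$ sits at $(1,0)$, so you want $q_1=q_2+(-\tfrac{\sqrt2}{2},\tfrac{\sqrt2}{2})=(1-\tfrac{\sqrt2}{2},\tfrac{\sqrt2}{2})$. The point $(2,0)+(-\tfrac{\sqrt2}{2},\pm\tfrac{\sqrt2}{2})$ you wrote fails twice: it is at unit distance from $q_4$, an unwanted contact, and only $\sqrt{2-\sqrt2}\approx0.77$ from $q_2$. Your first option $q_1=(1,1)$ also fails once $m\ge4$, since $|q_1-q_5|=|(1,1)-(2,1)|=1$. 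With the corrected point, $|q_1-q_3|^2=|q_1-q_4|^2=2+\sqrt2$ and $|q_1-q_5|^2=3$, and all other pairs are farther. The rest of your case analysis is right, so every noncontact distance is at least $\sqrt2>1.4\ge1+2c$.

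The paper takes a genuinely different route. It translates the four-point pattern $X_r,Y_r,Z_r,W_r$ by multiples of $(s,-1-c)$, so that at every trivalent disk the leg is nearly antiparallel to the next spine edge and the previous spine edge is nearly perpendicular to both; the pairwise inner products are $-c,-c,-1+2c^2$. This costs a full table of squared distances, and the margin is tight ($|X_r-Z_{r+1}|=1+2c$ exactly). That angle data is exactly what the paper uses right after the lemma: $B_m(\eta)$ is the Gram matrix at \emph{the} configuration built in this proof. Proposition~\ref{prop:convergence} needs its $(q,r)$ entries near $-\tfrac12$ and its $(p,q),(p,r)$ entries equal to $-\eta$, so that rescaling turns them into the entries of $C_m$. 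Your configuration has the opposite pattern: at each $q_{2k}$ the leg $e_{2k}$ is orthogonal to the next spine edge $e_{2k+1}$, so the $(q,r)$ Gram entry is $0$. Since $A\mapsto A^\lambda$ preserves zero entries, every admissible $A$ with $A^\lambda$ equal to your Gram matrix has $\normmax{A-C_m}\ge\tfrac12$. So your argument is simpler and uses $c$ only through $1+2c<\sqrt2$, but it establishes a weaker fact than the one the proof of Theorem~\ref{thm:main} draws on. The zigzag variant you mention is the one that would matter there, and its verification — a $c$-dependent case analysis with essentially no slack — is the part you left undone.
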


\begin{proof}
	Set
	\[
	s=\sqrt{1-c^2},
	\qquad
	\overrightarrow{d}=(s,-1-c).
	\]
	We construct the configuration by periodically translating a four-point pattern. For every $r\in\mathbb Z$, define
	\[
	\overrightarrow{X}_r=r\overrightarrow{d},
	\qquad
	\overrightarrow{Y}_r=r\overrightarrow{d}+(s,-c),
	\]
	\[
	\overrightarrow{Z}_r=r\overrightarrow{d}+(-s,-c),
	\qquad
	\overrightarrow{W}_r=r\overrightarrow{d}+\bigl((1+2c)s,\,1-c-2c^2\bigr).
	\]
	Enumerate the points by
	\[
	\overrightarrow{q}_1=\overrightarrow{Y}_{-1},
	\]
	and, for $r\geq0$,
	\[
	\overrightarrow{q}_{4r+2}=\overrightarrow{X}_r,
	\qquad
	\overrightarrow{q}_{4r+3}=\overrightarrow{Z}_r,
	\qquad
	\overrightarrow{q}_{4r+4}=\overrightarrow{Y}_r,
	\qquad
	\overrightarrow{q}_{4r+5}=\overrightarrow{W}_r,
	\]
	whenever the index is at most $m+1$.
	
	At $\overrightarrow{X}_r$, the three prescribed contact vectors are
	\[
	\overrightarrow{Y}_{r-1}-\overrightarrow{X}_r=(0,1),
	\qquad
	\overrightarrow{Z}_r-\overrightarrow{X}_r=(-s,-c),
	\qquad
	\overrightarrow{Y}_r-\overrightarrow{X}_r=(s,-c).
	\]
	At $\overrightarrow{Y}_r$, they are
	\[
	\overrightarrow{X}_r-\overrightarrow{Y}_r=(-s,c),
	\qquad
	\overrightarrow{W}_r-\overrightarrow{Y}_r=(2cs,1-2c^2),
	\qquad
	\overrightarrow{X}_{r+1}-\overrightarrow{Y}_r=(0,-1).
	\]
	All six vectors have length $1$. Hence every prescribed pair is tangent.
	
	We next exclude unintended contacts. For $\ell\in\mathbb Z$, direct calculation gives
	\[
	\begin{array}{c|c}
		\text{pair}&\text{squared distance}\\
		\hline
		\overrightarrow{X}_r,\overrightarrow{X}_{r+\ell};\ \overrightarrow{Y}_r,\overrightarrow{Y}_{r+\ell};\ \overrightarrow{Z}_r,\overrightarrow{Z}_{r+\ell};\ \overrightarrow{W}_r,\overrightarrow{W}_{r+\ell}
		&2(1+c)\ell^2\\[1mm]
		\overrightarrow{X}_r,\overrightarrow{Y}_{r+\ell}
		&1+2(1+c)\ell(\ell+1)\\[1mm]
		\overrightarrow{X}_r,\overrightarrow{Z}_{r+\ell};\ \overrightarrow{Y}_r,\overrightarrow{W}_{r+\ell}
		&1+2(1+c)\ell(\ell-1+2c)\\[1mm]
		\overrightarrow{X}_r,\overrightarrow{W}_{r+\ell}
		&2(1+c)(\ell^2+2c\ell+1)\\[1mm]
		\overrightarrow{Y}_r,\overrightarrow{Z}_{r+\ell}
		&2(1+c)\bigl(\ell^2-2(1-c)\ell+2(1-c)\bigr)\\[1mm]
		\overrightarrow{Z}_r,\overrightarrow{W}_{r+\ell}
		&5+8c-4c^2-8c^3+2(1+c)\ell(\ell+1).
	\end{array}
	\]
	The unit distances in the table are exactly
	\[
	\overrightarrow{X}_r\overrightarrow{Y}_{r-1},
	\qquad
	\overrightarrow{X}_r\overrightarrow{Y}_r,
	\qquad
	\overrightarrow{X}_r\overrightarrow{Z}_r,
	\qquad
	\overrightarrow{Y}_r\overrightarrow{W}_r,
	\]
	together with the equivalent translated contact $\overrightarrow{Y}_r\overrightarrow{X}_{r+1}$.
	
	We show that every remaining squared distance is at least $(1+2c)^2$. First,
	\[
	2(1+c)-(1+2c)^2=1-2c-4c^2>0
	\]
	for $0<c\leq1/5$. In the $\overrightarrow{X}_r,\overrightarrow{Y}_{r+\ell}$ row, the contact values are $\ell=-1,0$; for every other integer $\ell$,
	\[
	\ell(\ell+1)\geq2.
	\]
	In the $\overrightarrow{X}_r,\overrightarrow{Z}_{r+\ell}$ and $\overrightarrow{Y}_r,\overrightarrow{W}_{r+\ell}$ rows, the contact value is $\ell=0$, whereas for every nonzero integer $\ell$,
	\[
	\ell(\ell-1+2c)\geq2c.
	\]
	The quadratic factors in the $\overrightarrow{X}_r,\overrightarrow{W}_{r+\ell}$ and $\overrightarrow{Y}_r,\overrightarrow{Z}_{r+\ell}$ rows attain their integer minima at $\ell=0$ and $\ell=1$, respectively, and both minima equal $1$. Finally,
	\[
	\ell(\ell+1)\geq0
	\]
	for every integer $\ell$, and
	\[
	5+8c-4c^2-8c^3-(1+2c)^2
	=4(1+c)(1-2c^2)>0.
	\]
	Thus every noncontact pair has distance at least $1+2c$. Equality occurs, for example, for $\overrightarrow{X}_r$ and $\overrightarrow{Z}_{r+1}$.
\end{proof}

Let the contact edges be enumerated by
\[
e_1=\{q_1,q_2\},
\]
\[
e_{2k}=\{q_{2k},q_{2k+1}\},
\qquad
e_{2k+1}=\{q_{2k},q_{2k+2}\}.
\]

\begin{figure}[htbp]
	\centering
	\includegraphics[width=\textwidth]{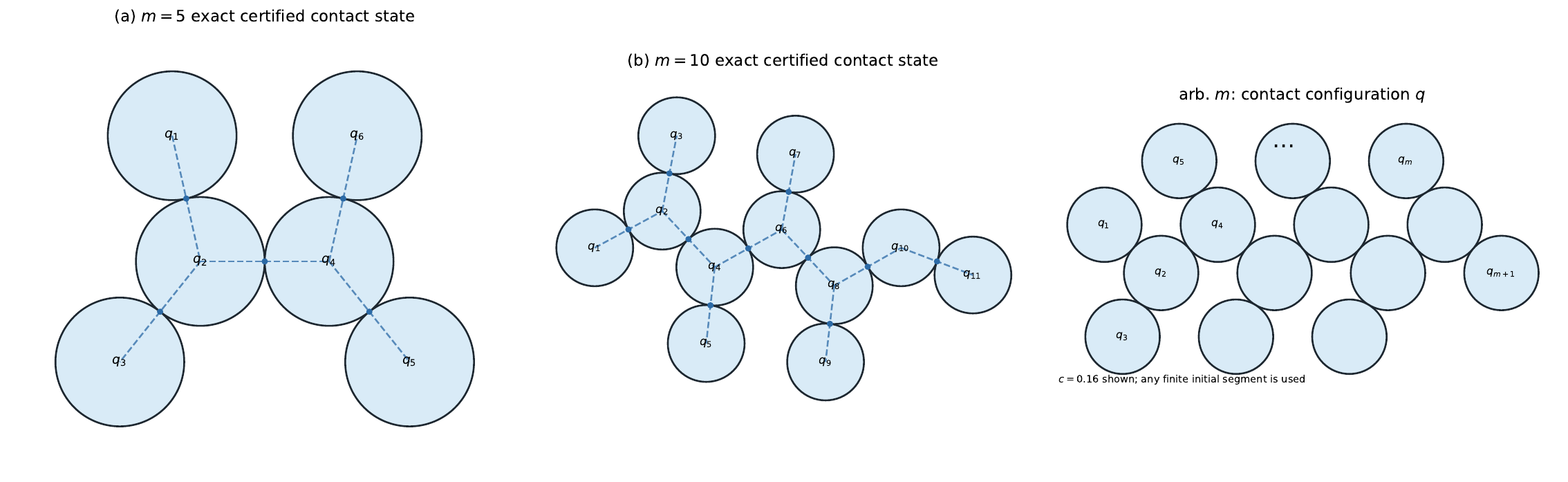}
	\caption{Reference contact configurations in their actual planar geometry, up to rigid motions. Panels~(a) and~(b) show exact finite certified configurations for $m=5$ and $m=10$. Panel~(c) shows a finite segment of the periodic caterpillar packing of Lemma~\ref{lem:packing}, with $c=0.16$ chosen only for visibility. The dashed center segments indicate the prescribed contact pairs, and the blue dots mark the corresponding tangency points.}
	\label{fig:contact-configurations}
\end{figure}


We now relate the matrix $C_m$ to
the Gram matrix of the wall normals of the
planar configuration constructed above.
Although the physical Gram matrix is symmetric,
the matrix $C_m$ need not be. We establish a connection between them
through a small perturbation and diagonal rescaling.

Let $B_m(\eta)$ be the Gram matrix of the unit wall normals in the tangent cone at the configuration of Lemma~\ref{lem:packing}, where
\[
\eta=\frac c2.
\]
At every trivalent disk, the three outward unit contact rays have pairwise scalar products
\[
-c,
\qquad
-c,
\qquad
-1+2c^2.
\]

If contacts \(\{i,j\}\) and \(\{j,k\}\) share disk \(j\), then $\langle\nu_{ij},\nu_{jk}\rangle = \frac12\langle q_i-q_j,q_k-q_j\rangle. $ If the contacts are disjoint, the corresponding wall normals are orthogonal. Therefore, for $p=2k-1$, $q=2k$, and $r=2k+1$,
\[
\left.B_m(\eta)\right|_{\{p,q,r\}}
=
\begin{pmatrix}
	1&-\eta&-\eta\\
	-\eta&1&-\frac12+4\eta^2\\
	-\eta&-\frac12+4\eta^2&1
\end{pmatrix}.
\]
Note that if $m=2k$ is even, the terminal pair of contacts
$\{m-1,m\}$ forms an incomplete block, with
\[
\left.B_m(\eta)\right|_{\{m-1,m\}}
=
\begin{pmatrix}
	1&-\eta\\
	-\eta&1
\end{pmatrix}.
\]
All off-diagonal entries not covered by these blocks vanish.

\begin{lemma}\label{lem:independence}
	The wall normals of the configuration in Lemma~\ref{lem:packing} are linearly independent. Consequently, $B_m(\eta)$ is positive definite.
\end{lemma}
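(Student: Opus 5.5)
The contact graph of Lemma~\ref{lem:packing} is a tree: it has $m+1$ vertices $q_1,\dots,q_{m+1}$ and exactly $m$ edges $e_1,\dots,e_m$, and it is connected. To see this, note that $q_1$ is joined to $q_2$. Each new vertex $q_{2k+1}$ or $q_{2k+2}$ is joined to the earlier vertex $q_{2k}$. So adding the vertices one at a time, each new vertex brings exactly one new edge, which rules out cycles. I will use this tree structure to prove linear independence directly in $\R^{2(m+1)}$.

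For the contact $e=\{i,j\}$, the wall normal is the vector $\nu_e$ whose block at disk $i$ is $u_e/\sqrt2$, whose block at disk $j$ is $-u_e/\sqrt2$, and whose other blocks are zero. Here $u_e=(q_i-q_j)/|q_i-q_j|$ is a unit vector in $\R^2$, nonzero since the contact has length $1$. Suppose $\sum_e \lambda_e\nu_e=0$. I will induct by removing leaves. Choose a leaf vertex $v$ of the tree, with unique incident edge $e$. The $v$-block of the sum is $\pm\lambda_e u_e/\sqrt2$, because no other edge touches $v$. Since $u_e\neq0$, this forces $\lambda_e=0$. Deleting $v$ and $e$ leaves a tree with one fewer edge, and iterating gives $\lambda_e=0$ for every edge. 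Hence $\nu_1,\dots,\nu_m$ are linearly independent. Note that this argument uses only acyclicity. It does not need the absence of extra contacts, and that absence is exactly what Lemma~\ref{lem:packing} guarantees, since the contact graph has no edges beyond the prescribed ones.

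The Gram matrix of linearly independent vectors is positive definite, because $x^{T}Bx=\bigl|\sum_i x_i\nu_i\bigr|^2>0$ for $x\neq0$. This holds for the wall normals at the configuration with parameter $c=2\eta$, so $B_m(\eta)$ is positive definite.

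The main point requiring care is confirming that the enumeration $e_1,\dots,e_m$ really yields $m$ distinct edges with no cycle, that is, that the indexing in Lemma~\ref{lem:packing} never repeats a pair when $m$ is even or odd. With the leaf-by-leaf ordering above this is immediate. As an alternative, one could argue from the block structure of $B_m(\eta)$: it is diagonally dominant away from the $-\tfrac12+4\eta^2$ entries. But the tree argument is cleaner and uniform in $m$, so I would use it.
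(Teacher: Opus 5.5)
Your proof is correct and follows the same route as the paper: the contact graph is a tree, and peeling off leaves one at a time forces every coefficient to vanish, because the leaf disk's coordinate block sees exactly one nonzero normal. Positive definiteness then follows from $x^{T}Bx=\bigl|\sum_i x_i\nu_i\bigr|^2$; you also verify the tree structure and write out this Gram-matrix step, both of which the paper leaves implicit.
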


\begin{proof}
	The contact graph is a tree. Suppose that
	\[
	\sum_e a_e\overrightarrow{\nu}_e=0,
	\]
	where $\overrightarrow{\nu}_e$ is the unit configuration-space normal associated to the contact $e$. Choose a leaf disk. In the two-dimensional coordinate block corresponding to that disk, exactly one normal $\nu_e$ is nonzero. Hence $a_e=0$. Remove that edge and its leaf and repeat. Induction over the number of edges shows that all coefficients vanish.
\end{proof}

\begin{proposition}\label{prop:convergence}
	For every $m\geq1$ and $0<\eta\leq1/10$, there exists an admissible matrix $A_{m,\eta}$ such that
	\[
	B_m(\eta)=A_{m,\eta}^{\lambda}
	\]
	and
	\[
	\normmax{A_{m,\eta}-C_m}\leq4\eta^2.
	\]
\end{proposition}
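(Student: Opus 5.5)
The plan is to construct the scaling explicitly. By the definition in Lemma~\ref{lem:BI-tools}(iii), the relation $B_m(\eta)=A_{m,\eta}^{\lambda}$ means $b_{ij}=\frac{\lambda_j}{\lambda_i}a_{ij}$. So for any choice of positive weights $\lambda_1,\dots,\lambda_m$ I will simply \emph{define}
\[
a_{ij}=\frac{\lambda_i}{\lambda_j}\,b_{ij},
\]
where $b_{ij}$ are the entries of $B_m(\eta)$. This gives $a_{ii}=b_{ii}=1$, so $A_{m,\eta}$ is automatically admissible and $A_{m,\eta}^{\lambda}=B_m(\eta)$ holds by construction. The entire content is therefore to choose $\lambda$ so that $A_{m,\eta}$ is entrywise within $4\eta^2$ of $C_m$.

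The guiding observation is the following. In each block $\{p,q,r\}=\{2k-1,2k,2k+1\}$, the entries $b_{pq}=b_{pr}=-\eta$ must be inflated to $-1$ in row $p$. This forces $\lambda_p/\lambda_q=\lambda_p/\lambda_r=1/\eta$. On the other hand, $b_{qr}=b_{rq}=-\tfrac12+4\eta^2$ should stay essentially unchanged, which forces $\lambda_q=\lambda_r$. I therefore set
\[
\lambda_{2k-1}=\eta^{k-1},\qquad \lambda_{2k}=\eta^{k}\qquad (k\geq 1).
\]
Then $\lambda_{2k}=\lambda_{2k+1}=\eta\,\lambda_{2k-1}$. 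The definitions are consistent across blocks because the index $r=2k+1$ of one block is the index $p$ of the next block, and $\lambda_{2k+1}=\eta^k$ matches both roles.

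With this choice I would check the entries block by block.
\begin{itemize}
\item $a_{pq}=\eta^{-1}(-\eta)=-1$ and $a_{pr}=-1$, so these agree exactly with $c_{pq}$ and $c_{pr}$.
\item $a_{qp}=a_{rp}=\eta\cdot(-\eta)=-\eta^2$, whose distance to $c_{qp}=c_{rp}=0$ is $\eta^2$.
\item $a_{qr}=a_{rq}=-\tfrac12+4\eta^2$, whose distance to $-\tfrac12$ is exactly $4\eta^2$.
\item For even $m$, the terminal pair $\{m-1,m\}$ gives $a_{m-1,m}=-1=c_{m-1,m}$ and $a_{m,m-1}=-\eta^2$, which is within $\eta^2$ of $c_{m,m-1}=0$.
\item Every other off-diagonal entry $b_{ij}$ vanishes, since the contacts are disjoint and the wall normals are orthogonal. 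Hence $a_{ij}=0=c_{ij}$ there.
\end{itemize}
I still need to confirm that the nonzero pattern of $C_m$ coincides with the union of these blocks and terminal pairs. This follows from the definition of $C_m$: the entry $-1$ occurs at $(i,i+1)$ and $(i,i+2)$ for odd $i$, and the entry $-\tfrac12$ occurs at $(2k,2k+1)$ and $(2k+1,2k)$. Taking the maximum over all entries gives $\normmax{A_{m,\eta}-C_m}\le 4\eta^2$.

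I do not expect a genuine obstacle. The only delicate points are bookkeeping: the consistency of $\lambda$ across overlapping blocks, which is handled by the geometric progression above, and correctly reading the direction of the scaling convention in Lemma~\ref{lem:BI-tools}(iii). The hypothesis $\eta\le 1/10$ plays no role in the estimate itself. It is only needed so that $\eta=c/2$ comes from the packing of Lemma~\ref{lem:packing} with $c\le 1/5$, ensuring that $B_m(\eta)$ has the stated form.
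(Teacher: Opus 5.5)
Your proposal is correct and matches the paper's proof: the paper uses the same weights $\lambda_i=\eta^{\lfloor i/2\rfloor}$ (identical to your $\lambda_{2k-1}=\eta^{k-1}$, $\lambda_{2k}=\eta^{k}$), defines $(A_{m,\eta})_{ij}=\frac{\lambda_i}{\lambda_j}(B_m(\eta))_{ij}$, and checks each complete $3\times 3$ block and the terminal $2\times 2$ block. You also verify explicitly that the nonzero pattern of $C_m$ coincides with the union of these blocks, and you observe that the bound $\eta\le 1/10$ is not used in the estimate; the paper leaves both points implicit.
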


\begin{proof}
	Define
	\[
	\lambda_i=\eta^{\lfloor i/2\rfloor},
	\qquad 1\leq i\leq m,
	\]
	and set
	\[
	(A_{m,\eta})_{ij}
	=
	\frac{\lambda_i}{\lambda_j}(B_m(\eta))_{ij}.
	\]
	Then, by definition,
	\[
	B_m(\eta)=A_{m,\eta}^{\lambda},
	\qquad
	(A^\lambda)_{ij}=\frac{\lambda_j}{\lambda_i}a_{ij}.
	\]
	For $ p=2k-1, \quad q=2k, \quad r=2k+1$, we have
	\[
	\lambda_p=\eta^{k-1},
	\qquad
	\lambda_q=\lambda_r=\eta^k.
	\]
	Therefore
	\[
	\left.A_{m,\eta}\right|_{\{p,q,r\}}
	=
	\begin{pmatrix}
		1&-1&-1\\
		-\eta^2&1&-\frac12+4\eta^2\\
		-\eta^2&-\frac12+4\eta^2&1
	\end{pmatrix}.
	\]
	Finally, we note that if $m=2k$ is even, then $ \lambda_{m-1}=\eta^{k-1},
	\quad \lambda_m=\eta^k$, and the terminal block becomes
	\[
	\left.A_{m,\eta}\right|_{\{m-1,m\}}
	=
	\begin{pmatrix}
		1&-1\\
		-\eta^2&1
	\end{pmatrix},
	\]
	whereas
	\[
	\left.C_m\right|_{\{m-1,m\}}
	=
	\begin{pmatrix}
		1&-1\\
		0&1
	\end{pmatrix}.
	\]
	Thus the discrepancy on the terminal block is at most
	$\eta^2$. Together with the estimates for complete $3\times 3$ blocks, we have	
	\[
	\normmax{A_{m,\eta}-C_m}\leq4\eta^2.
	\]
\end{proof}

\section{Proof of Theorem~\ref{thm:main}}
\label{section proof of main theorem}
\begin{proof}[Proof of Theorem~\ref{thm:main}]
	Fix $m\geq2$. By Lemma~\ref{lem:model-trajectory} and Lemma~\ref{lemma 1}, the matrix $C_m$ admits a genuine $C_m$-trajectory with $N_m$ collisions. By Lemma~\ref{lemma 2}, there exists $\delta_m>0$ such that every admissible matrix $A$ satisfying
	\[
	\normmax{A-C_m}<\delta_m
	\]
	admits an $A$-trajectory with $N_m$ collisions.
	
	Choose $\eta_m\in(0,1/10]$ sufficiently small that
	\[
	4\eta_m^2<\delta_m.
	\]
	Proposition~\ref{prop:convergence} implies that $A_{m,\eta_m}$ admits a trajectory with $N_m$ collisions. Since
	\[
	B_m(\eta_m)=A_{m,\eta_m}^{\lambda},
	\]
	Lemma~\ref{lemma 3} shows that the symmetric matrix $B_m(\eta_m)$ admits a trajectory with the same number of collisions.
	
	By Lemmas~\ref{lem:packing} and~\ref{lem:independence}, $B_m(\eta_m)$ is the Gram matrix of the linearly independent wall normals of the tangent cone at a planar configuration of $m+1$ identical hard disks. The corresponding $B_m(\eta_m)$-trajectory is therefore the wall-coordinate representation of a billiard trajectory in that tangent cone. Applying Lemma~\ref{lemma 4}, we obtain a genuine trajectory of $m+1$ identical hard disks in $\R^2$ with at least $N_m$ collisions. Hence
	\[
	\MaxCol(m+1,2)\geq N_m.
	\]
	
	Set $n=m+1$. If $n=2k+1$, then
	\[
	N_{n-1}=6\cdot2^k-3k-6\geq2^k.
	\]
	If $n=2k$ with $k\geq2$, then
	\[
	N_{n-1}=\frac{9}{2}\cdot 2^k-3k-5\geq2^k.
	\]
	Thus
	\[
	N_{n-1}\geq2^{\lfloor n/2\rfloor}
	\]
	for every $n\geq3$, and the theorem follows.
\end{proof}

\begin{remark}
	\label{remark finite time interval}
	An equivalent formulation of Lemma~\ref{lemma 4} \cite[Lemma~2.2]{BI} can be used for numerical evaluation: Let $q$ be a coordinate in configuration space of an $n$-ball system in $\RR^d$, and suppose that $ \gamma:(a,b)\longrightarrow K $ is a nonsingular billiard trajectory with $N$ collisions at times $ t_1<\cdots<t_N$.
	Then, for every sufficiently small $\epsilon>0$, there exists a
	billiard trajectory $\widetilde\gamma_\epsilon$ in $B_{n,d}$ with
	$N$ collisions occurring in the same order, such that 
	\[
	\widetilde\gamma_\epsilon(t) = q + \epsilon \gamma(t) + o(\epsilon).
	\]
	Moreover, for every $k=1,...,N$, the corresponding collision moments of the actual trajectory,
	$\widetilde t_k(\epsilon)$ converges to $ t_k $ as $\epsilon\to0$.
	In particular, $ \MaxCol(n,d)\geq N$.
	
	In the proof of Lemma~\ref{lem:model-trajectory},
	the spacing at the $k$-th induction step is
	$\beta_k=2^{-(k-1)}$, and the new outermost
	collision times extend the previous interval
	by $3\beta_k/4$ on each side. Hence the
	limiting half-length is bounded by $ \sum_{k\ge1}\frac{3}{4}\cdot 2^{-(k-1)}
	=\frac32$. Therefore, all collisions of the generalized
	model trajectory occur within an interval
	of length at most $3$.
	
	By choosing the perturbations sufficiently small,
	we may arrange that the corresponding genuine
	tangent-cone trajectory has collision times
	$t_1<\cdots<t_{N_m}$ satisfying $ t_{N_m}-t_1<3 $.
	
	Let $\widetilde t_j(\epsilon)$ denote the
	corresponding collision times of the actual
	trajectory furnished by Lemma~\ref{lemma 4}.
	Then $	\widetilde t_j(\epsilon)\longrightarrow t_j
	\quad\text{as }\epsilon\to0 $.
	
	After reparametrizing the actual trajectory by
	$t\mapsto t_0+t/\epsilon$, where $t_0<t_1$,
	the $j$-th physical collision moment become $ \epsilon\bigl(\widetilde t_j(\epsilon)-t_0\bigr) $. Consequently, the physical collision period
	satisfies
	\[
	T_{\mathrm{phys}}(\epsilon)
	=
	\epsilon(t_{N_m}-t_1)+o(\epsilon)
	<3\epsilon
	\]
	for all sufficiently small $\epsilon>0$. Hence collisions occur in a finite time period uniformly bounded above regardless of $n$.
\end{remark}

%


\begin{appendices}
\newpage	
	\section{Actual trajectory of four-disk system}
	We give explicit initial positions and velocities
	for a four-disk trajectory realizing the collision
	sequence of Example~\ref{example four disks}.
	\begin{example}[Four disks (continued)]
		\label{example four disks continued}
		Recall that $s=\sqrt{1-c^2}$. Take the reference contact configuration
		\[
		\bar q_1=(0,1),\qquad
		\bar q_2=(0,0),\qquad
		\bar q_3=(-s,-c),\qquad
		\bar q_4=(s,-c),
		\]
		and enumerate its three contacts by
		\[
		e_1=\{q_1,q_2\},\qquad
		e_2=\{q_2,q_3\},\qquad
		e_3=\{q_2,q_4\}.
		\]
		Recall Lemma~\ref{lemma 4}.
		Given $	f(-1)=\left(1,\frac34,\frac14\right),
		\quad f'(-1)=(-1,-1,-1)$ and taking
		$
		(\lambda_1,\lambda_2,\lambda_3)
		=\left(1,\frac c2,\frac c2\right)$, we introduce a parameter $\varepsilon>0$
		controlling the displacement of the initial
		positions from the reference contact configuration.
		We prescribe
		\[
		\begin{aligned}
			q_1(0)-q_2(0)
			&=\bigl(1+\varepsilon\lambda_1f_1(-1)\bigr)
			(\bar q_1-\bar q_2),\\
			q_3(0)-q_2(0)
			&=\bigl(1+\varepsilon\lambda_2f_2(-1)\bigr)
			(\bar q_3-\bar q_2),\\
			q_4(0)-q_2(0)
			&=\bigl(1+\varepsilon\lambda_3f_3(-1)\bigr)
			(\bar q_4-\bar q_2),
		\end{aligned}
		\]
		and
		\[
		\begin{aligned}
			\dot q_1(0)-\dot q_2(0)
			&=\lambda_1f_1'(-1)(\bar q_1-\bar q_2),\\
			\dot q_3(0)-\dot q_2(0)
			&=\lambda_2f_2'(-1)(\bar q_3-\bar q_2),\\
			\dot q_4(0)-\dot q_2(0)
			&=\lambda_3f_3'(-1)(\bar q_4-\bar q_2).
		\end{aligned}
		\]
		Take $q_2(0)=\bar q_2$ and $\dot q_2(0)=0$. We have
		\[
		\begin{aligned}
			q_1(0)&=(1+\varepsilon)\bar q_1,
			&
			\dot q_1(0)&=(0,-1),\\
			q_2(0)&=\bar q_2,
			&
			\dot q_2(0)&=(0,0),\\
			q_3(0)&=\left(1+\frac{3\varepsilon c}{8}\right)\bar q_3,
			&
			\dot q_3(0)&=\frac c2(s,c),\\
			q_4(0)&=\left(1+\frac{\varepsilon c}{8}\right)\bar q_4,
			&
			\dot q_4(0)&=\frac c2(-s,c).
		\end{aligned}
		\]
		The corresponding normalized wall coordinates are
		$\varepsilon\lambda_i f_i(-1)/\sqrt2$ and their derivatives are
		$\lambda_i f_i'(-1)/\sqrt2$; the common factor $1/\sqrt2$ is immaterial.
		
		Note that with $m=3$ and $ \eta=\frac c2,
		\quad
		(\lambda_1,\lambda_2,\lambda_3)
		=
		\left(1,\frac c2,\frac c2\right) $,
		we have
		\[
		C_3=
		\begin{pmatrix}
			1&-1&-1\\
			0&1&-\frac12\\
			0&-\frac12&1
		\end{pmatrix},
		\]
		\[
		B_3(\frac{c}{2})=
		\begin{pmatrix}
			1&-\frac c2&-\frac c2\\
			-\frac c2&1&-\frac12+c^2\\
			-\frac c2&-\frac12+c^2&1
		\end{pmatrix},
		\]
		and
		\[
		A_{3,\frac{c}{2}}=
		\begin{pmatrix}
			1&-1&-1\\
			-\frac{c^2}{4}&1&-\frac12+c^2\\
			-\frac{c^2}{4}&-\frac12+c^2&1
		\end{pmatrix}.
		\]
		It is not hard to see that $
		B_3(\frac{c}{2})=A_{3,\frac{c}{2}}^\lambda $ with $	(A_{3,\frac{c}{2}}^\lambda)_{ij}
		=
		\frac{\lambda_j}{\lambda_i}a_{ij}
		$, and $A_{3,\frac{c}{2}}$ converges to $C_3$ as $c\to 0$.
		
		For every sufficiently small $c>0$, these initial data realize the desired collision sequence in Example~\ref{example four disks} whenever $\varepsilon>0$ is sufficiently small.
	\end{example}
\end{appendices}

\end{document}